  \newfont\fiverm{cmr5} 
\newtheorem{thm}{Theorem}[section]
\newtheorem{lem}[thm]{Lemma}
\newtheorem{prop}[thm]{Proposition}
\newtheorem{rmk}[thm]{Remark}
\newtheorem{thm-con}[thm]{Theorem-Conjecture}
\numberwithin{equation}{section}
\theoremstyle{definition}
\newcommand{\f}{\Bbb F}
\newcommand\longequation[1]{\parbox[t]{\textwidth}{\raggedright$#1$}}
\begin{document}

\title[Normal Polynomials over Finite Fields]{A Criterion for the Normality of Polynomials over Finite Fields Based on Their Coefficients}

\author[Xiang-dong Hou]{Xiang-dong Hou}
\address{Department of Mathematics and Statistics,
University of South Florida, Tampa, FL 33620}
\email{xhou@usf.edu}


\keywords{finite field, normal basis, normal polynomial, symmetric polynomial}

\subjclass[2020]{05E05, 11C08, 11T55, 12-08, 20B30}

\begin{abstract}
An irreducible polynomial over $\Bbb F_q$ is said to be normal over $\Bbb F_q$ if its roots are linearly independent over $\Bbb F_q$. We show that there is a polynomial $h_n(X_1,\dots,X_n)\in\Bbb Z[X_1,\dots,X_n]$, independent of $q$, such that if an irreducible polynomial $f=X^n+a_1X^{n-1}+\cdots+a_n\in\Bbb F_q[X]$ is such that $h_n(a_1,\dots,a_n)\ne 0$, then $f$ is normal over $\Bbb F_q$. The polynomial $h_n(X_1,\dots,X_n)$ is computed explicitly for $n\le 5$ and partially for $n=6$. When $\text{char}\,\Bbb F_q=p$, we also show that there is a polynomial $h_{p,n}(X_1,\dots,X_n)\in\Bbb F_p[X_1,\dots,X_n]$, depending on $p$, which is simpler than $h_n$ but has the same property. These results remain valid for monic separable irreducible polynomials over an arbitrary field with a cyclic Galois group.
\end{abstract}

\maketitle

\section{Introduction}

Let $F$ be a field and $K$ be a finite Galois extension over $K$. If $a\in K$ is such that $\{\sigma(a):\sigma\in\text{Aut}(K/F)\}$ forms a basis of $K/F$, $a$ is called a {\em normal element} of $K$ over $F$ and $\{\sigma(a):\sigma\in\text{Aut}(K/F)\}$ is called a {\em normal basis} of $K$ over $F$. The existence of normal bases was proved a long time ago by Deuring \cite{Deuring-MA-1933}. (For a more accessible reference, see Lang \cite[\S VI.13]{Lang-2002}.) Normal bases have applications in many areas, especially in the theory and applications of finite fields.

Let $\f_q$ denote the finite field with $q$ elements. An irreducible polynomial $f\in\f_q[X]$ of degree $n$ is said to be {\em normal} over $\f_q$ if its roots are linearly independent over $\f_q$, i.e., the roots form a normal basis of $\f_{q^n}$ over $\f_q$. Many theoretic results and computational procedures concerning finite fields utilize normal bases. There is an extensive literature on normal bases of finite fields; see \cite{Gao-thesis-1993}, \cite[\S\S 5.2 -- 5.4]{Mullen-Panario-HF-2013}, and the references therein.

In this paper, we are interested in finding sufficient conditions for an irreducible polynomial over $\f_q$ to be normal. More precisely, we show that there is a polynomial $h_n(X_1,\dots,X_n)\in\Bbb Z[X_1,\dots,X_n]$, independent of $q$, such that if $f=X^n+a_1X^{n-1}+\cdots+a_n\in\f_q[X]$ is irreducible and $h_n(a_1,\dots,a_n)\ne 0$, then $f$ is normal over $\f_q$. The polynomial $h_n$ is computed explicitly for $n\le 5$ and partially for $n=6$. There is also a characteristic specific version of the polynomial $h_n$ which is simpler but has the same property. Let $p=\text{char}\,\f_q$. There is a polynomial $h_{p,n}(X_1,\dots,X_n)\in\f_p[X_1,\dots,X_n]$, depending on $p$, such that if $f=X^n+a_1X^{n-1}+\cdots+a_n\in\f_q[X]$ is irreducible and $h_{p,n}(a_1,\dots,a_n)\ne 0$, then $f$ is normal. We remind the reader that $h_{p,n}$ is not necessarily the reduction of $h_n$ modulo $p$.

The basic idea of our approach is the notion of symmetrization of polynomials. 
Let $F$ be a field, $n\ge 2$ be an integer and $f(X_0,\dots,X_{n-1})\in F[X_0,\dots,X_{n-1}]$. We are interested in finding a symmetric polynomial $g(X_0,\dots,X_{n-1})\in F[X_0,\dots,X_{n-1}]$ such that $f\mid g$; we refer to such a $g$ as a {\em symmetrization} of $f$. Naturally, we require the degree of $g$ to be as low as possible. The relevance of this question will become clear in the next section. 

Let the symmetric group $S_n$ act on $F[X_0,\dots,X_{n-1}]$ by permuting $X_0\cdots,X_{n-1}$. Let $\text{Stab}(f)=\{\sigma\in S_n:\sigma(f)=f\}$ be the stabilizer of $f$ in $S_n$. Let $\mathcal C$ be a system of representatives of the left cosets of $\text{Stab}(f)$ in $S_n$. Then $g=\prod_{\sigma\in\mathcal C}\sigma(f)$ is a symmetrization of $f$. (Note that $g$ is independent of the choice of $\mathcal C$.) To reduced the degree of the symmetrization of $f$, it is sometimes helpful to factor $f$ first and then determine the symmetrization fo each factor.

\section{Symmetrization of $\Delta_n$}

Define 
\begin{equation}\label{2.1}
\Delta_n(X_0,\dots,X_{n-1})=\det\left[
\begin{matrix}
X_0&X_1&\cdots&X_{n-1}\cr
X_{n-1}&X_0&\cdots&X_{n-2}\cr
\vdots&\vdots&\ddots&\vdots\cr
X_1&X_2&\cdots&X_0
\end{matrix}\right]\in\Bbb Z[X_0\dots,X_{n-1}].
\end{equation}
Let $f=X^n+a_1X^{n-1}+\cdots+a_n\in\f_q[X]$ be irreducible, and let $r_0=r, r_1=r^q,\dots,r_{n-1}=r^{q^{n-1}}$ be the roots of $f$ in $\f_{q^n}$. It is well known that $r_0,\dots,r_{n-1}$ are linearly independent over $\f_q$ if and only if $\Delta_n(r_0,\dots,r_{n-1})\ne 0$; see \cite[Lemma~3.51]{Lidl-Niederreiter-FF-1997}.
Note that $\Delta_n(r_0,\dots,r_{n-1})$ cannot be expressed in terms of the coefficients $a_1,\dots, a_n$. However, let $\Sigma_n$ denote a symmetrization of $\Delta_n$. Then $\Sigma_n(r_0,\dots,r_{n-1})$ is a polynomial in $a_1,\dots, a_n$, and moreover, $\Sigma_n(r_0,\dots,r_{n-1})\ne 0$ implies $\Delta_n(r_0,\dots,r_{n-1})\ne 0$. The purpose of this section is to determine $\Sigma_n$.

We can write 
\begin{equation}\label{2.2}
\Delta_n=\prod_{i\in\Bbb Z/n\Bbb Z}\Bigl(\sum_{j\in\Bbb Z/n\Bbb Z}\epsilon_n^{ij}X_j\Bigr),
\end{equation}
where $\epsilon_n=e^{2\pi\sqrt{-1}/n}$. Let 
\begin{equation}\label{2.3}
\Psi_n(X_0,\dots,X_{n-1})=\prod_{i\in(\Bbb Z/n\Bbb Z)^\times}\Bigl(\sum_{j\in\Bbb Z/n\Bbb Z}\epsilon_n^{ij}X_j\Bigr).
\end{equation}
Clearly, $\Psi_n$ is invariant under the action of the Galois group $\text{Aut}(\Bbb Q(\epsilon_n)/\Bbb Q)$. Hence $\Psi_n\in\Bbb Q[X_0,\dots,X_{n-1}]$. Since the coefficients of $\Psi_n$ are integral over $\Bbb Z$, we have $\Psi_n\in\Bbb Z[X_0,\dots,X_{n-1}]$. For the formulas of $\Psi_n$ with $n\le 6$, see the appendix A1. From \eqref{2.2} we have
\begin{equation}\label{2.4}
\Delta_n=\prod_{m\mid n}\Psi_m\Bigl(\sum_{j=0}^{n/m-1}X_{mj},\sum_{j=0}^{n/m-1}X_{1+mj},\dots,\sum_{j=0}^{n/m-1}X_{m-1+mj}\Bigr).
\end{equation}

We treat the symmetric group $S_n$ as the permutation group of $\Bbb Z/n\Bbb Z=\{0,1,\dots,n-1\}$ and $S_{n-1}<S_n$ as the permutation group of $\{1,2,\dots,n-1\}$.
For $a\in(\Bbb Z/n\Bbb Z)^\times$ and $b\in\Bbb Z/n\Bbb Z$, let $\alpha_{a,b}$ be the permutation of $\Bbb Z/n\Bbb Z$ defined by the affine map $x\mapsto ax+b$. Let $G_n=\{\alpha_{a,b}:a\in(\Bbb Z/n\Bbb Z)^\times, b\in\Bbb Z/n\Bbb Z\}=\text{AGL}(1,\Bbb Z/n\Bbb Z)<S_n$ and let $G_n^*=\{\alpha_{a,0}: a\in(\Bbb Z/n\Bbb Z)^\times\}<S_{n-1}$. 

\begin{lem}\label{L2.1} 
We have
\[
\text{\rm Stab}(\Psi_n)=
\begin{cases}
\{\text{\rm id}\}&\text{if}\ n=1,2,\cr
G_n&\text{if}\ n>2.
\end{cases}
\]
\end{lem}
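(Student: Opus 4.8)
The plan is to exploit the factorization of $\Psi_n$ into linear forms recorded in \eqref{2.3}. Write $L_i=\sum_{j\in\Bbb Z/n\Bbb Z}\epsilon_n^{ij}X_j$ for $i\in(\Bbb Z/n\Bbb Z)^\times$, so that $\Psi_n=\prod_i L_i$. First I would establish two elementary facts. The $L_i$ are pairwise non-proportional: comparing coefficients, $L_i=cL_{i'}$ forces $c=1$ and $i=i'$, so the $L_i$ are pairwise non-associate irreducibles in the unique factorization domain $\Bbb Q(\epsilon_n)[X_0,\dots,X_{n-1}]$. Moreover, for $\sigma\in S_n$ one computes $\sigma(L_i)=\sum_k\epsilon_n^{\,i\sigma^{-1}(k)}X_k$, which is again a nonzero linear form.

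For the inclusion $\text{\rm Stab}(\Psi_n)\subseteq G_n$, suppose $\sigma(\Psi_n)=\Psi_n$. Since $\prod_i\sigma(L_i)=\prod_i L_i$ and each $\sigma(L_i)$ is an irreducible linear factor, unique factorization yields a bijection $\pi$ of $(\Bbb Z/n\Bbb Z)^\times$ and scalars $c_i$ with $\sigma(L_i)=c_iL_{\pi(i)}$. Taking $i=1$ and comparing the coefficient of each $X_k$ gives $\epsilon_n^{\,\sigma^{-1}(k)}=c_1\epsilon_n^{\,\pi(1)k}$; evaluating at $k=\sigma(0)$ determines $c_1$ and then forces $\sigma^{-1}(k)\equiv\pi(1)(k-\sigma(0))\pmod n$ for all $k$. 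Thus $\sigma^{-1}$, and hence $\sigma$, is an affine map, i.e. $\sigma\in G_n$. This argument is valid for every $n$.

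For the reverse inclusion I would compute the action of an arbitrary $\alpha_{a,b}\in G_n$ directly. A change of summation index gives $\alpha_{a,b}(L_i)=\epsilon_n^{-ia^{-1}b}L_{a^{-1}i}$, and since $i\mapsto a^{-1}i$ permutes $(\Bbb Z/n\Bbb Z)^\times$, multiplying over all $i$ yields
\[
\alpha_{a,b}(\Psi_n)=\epsilon_n^{-a^{-1}b\,S}\,\Psi_n,\qquad S=\sum_{i\in(\Bbb Z/n\Bbb Z)^\times}i\ \ \text{in}\ \ \Bbb Z/n\Bbb Z.
\]
The crux is then the purely arithmetic claim that $S\equiv0\pmod n$ when $n>2$: pairing each unit $i$ with $n-i$ partitions $(\Bbb Z/n\Bbb Z)^\times$ into pairs summing to $n$, and no unit is fixed by this pairing since $2i\equiv0\pmod n$ is impossible for a unit once $n>2$. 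Hence $S\equiv0$, so every $\alpha_{a,b}$ fixes $\Psi_n$, giving $G_n\subseteq\text{\rm Stab}(\Psi_n)$ and therefore equality for $n>2$.

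Finally I would dispose of the small cases. For $n=1$ we have $S_1=\{\text{\rm id}\}$ and there is nothing to prove; for $n=2$ we have $\Psi_2=X_0-X_1$, and the transposition sends it to $-(X_0-X_1)$, so $\text{\rm Stab}(\Psi_2)=\{\text{\rm id}\}$. This is consistent with the formula above, which gives $S=1$ for $n=2$, so that $\alpha_{1,1}$ scales $\Psi_2$ by $\epsilon_2^{-1}=-1$. The main obstacle I anticipate is the reverse inclusion: one must verify that the product of the scalars $\epsilon_n^{-ia^{-1}b}$ is exactly $1$, and it is precisely the congruence $S\equiv0\pmod n$, valid for $n>2$ but failing at $n=2$, that accounts for the two cases in the statement.
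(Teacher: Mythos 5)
Your proof is correct and follows essentially the same route as the paper's: the inclusion $\text{Stab}(\Psi_n)\subseteq G_n$ via unique factorization applied to the linear factors $L_i$, and the reverse inclusion via the vanishing of $\sum_{i\in(\Bbb Z/n\Bbb Z)^\times}i$ modulo $n$ for $n>2$. The only cosmetic differences are that you compute the action of a general $\alpha_{a,b}$ in one stroke where the paper checks the generators $\alpha_{a,0}$ and $\alpha_{1,1}$ separately, and you supply the pairing argument for $\sum_{i\in(\Bbb Z/n\Bbb Z)^\times}i\equiv 0\pmod n$, which the paper asserts without proof.
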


\begin{proof}
Since $\Psi_2(X_0,X_1)=X_0-X_1$, its stabilizer in $S_2$ is the trivial subgroup $\{\text{id}\}$. 

Now assume that $n>2$. For $a\in(\Bbb Z/n\Bbb Z)^\times$, we have
\begin{align*}
\alpha_{a,0}(\Psi_n)\,&=\prod_{i\in(\Bbb Z/n\Bbb Z)^\times}\Bigl(\sum_{j\in\Bbb Z/n\Bbb Z}\epsilon_n^{ij}X_{aj}\Bigr)=\prod_{i\in(\Bbb Z/n\Bbb Z)^\times}\Bigl(\sum_{j\in\Bbb Z/n\Bbb Z}\epsilon_n^{ia^{-1}j}X_{j}\Bigr)\cr
&=\prod_{i\in(\Bbb Z/n\Bbb Z)^\times}\Bigl(\sum_{j\in\Bbb Z/n\Bbb Z}\epsilon_n^{ij}X_{j}\Bigr)=\Psi_n.
\end{align*}
For $b\in\Bbb Z/n\Bbb Z$, we have
\begin{align*}
\alpha_{1,1}(\Psi_n)\,&=\prod_{i\in(\Bbb Z/n\Bbb Z)^\times}\Bigl(\sum_{j\in\Bbb Z/n\Bbb Z}\epsilon_n^{ij}X_{j+1}\Bigr)=\prod_{i\in(\Bbb Z/n\Bbb Z)^\times}\Bigl(\sum_{j\in\Bbb Z/n\Bbb Z}\epsilon_n^{i(j-1)}X_{j}\Bigr)\cr
&=\prod_{i\in(\Bbb Z/n\Bbb Z)^\times}\epsilon_n^{-i}\Bigl(\sum_{j\in\Bbb Z/n\Bbb Z}\epsilon_n^{ij}X_{j}\Bigr)=\epsilon_n^{\sum_{i\in(\Bbb Z/n\Bbb Z)^\times}i}\prod_{i\in(\Bbb Z/n\Bbb Z)^\times}\Bigl(\sum_{j\in\Bbb Z/n\Bbb Z}\epsilon_n^{ij}X_{aj}\Bigr)\cr
&=\epsilon_n^{\sum_{i\in(\Bbb Z/n\Bbb Z)^\times}i}\Psi_n.
\end{align*} 
In the above, since $n>2$, we have
\[
\sum_{i\in(\Bbb Z/n\Bbb Z)^\times}i=0.
\]
Hence $\alpha_{1,1}(\Psi_n)=\Psi_n$. Since $G_n$ is generated by $\alpha_{a,0}$, $a\in(\Bbb Z/n\Bbb Z)^\times$, and $\alpha_{1,1}$, we have $G_n\subset\text{Stab}(\Psi_n)$.

On the other hand, assume that $\sigma\in \text{Stab}(\Psi_n)$. Since $\sigma(\sum_{j=0}^{n-1}\epsilon_n^jX_j)=\sum_{j=0}^{n-1}\epsilon_n^{\sigma^{-1}(j)}X_j$ divides $\sigma(\Psi_n)=\Psi_n=\prod_{i\in(\Bbb Z/n\Bbb Z)^\times}(\sum_{j\in\Bbb Z/n\Bbb Z}\epsilon_n^{ij}X_j)$, there exist $i\in(\Bbb Z/n\Bbb Z)^\times$ and $t\in\Bbb C^\times$ such that 
\[
(\epsilon_n^{\sigma^{-1}(0)},\dots,\epsilon_n^{\sigma^{-1}(n-1)})=t(\epsilon_n^{i\cdot 0},\dots,\epsilon_n^{i(n-1)}).
\]
It follows from here that $t=\epsilon_n^{\sigma^{-1}(0)}$ and $\sigma^{-1}(j)=ij+\sigma^{-1}(0)$ for all $j\in\Bbb Z/n\Bbb Z$. Hence $\sigma\in G_n$.
\end{proof}

Let $\mathcal C_n$ be a system of representatives of left cosets of $\text{Stab}(\Psi_n)$ in $S_n$ and let
\begin{equation}\label{2.5}
\Phi_n=\prod_{\sigma\in\mathcal C_n}\sigma(\Psi_n).
\end{equation}
Then $\Phi_n$ is a symmetrization of $\Psi_n$. Obviously, 
\[
\Phi_1=X_0,\quad \Phi_2=(X_0-X_1)(X_1-X_0). 
\]
For $n>2$, $\mathcal C_n$ is a system of representatives of left cosets of $G_n$ in $S_n$. In this case, we can choose $\mathcal C_n$ to be a system of representatives of left cosets of $G_n^*$ in $S_{n-1}$.
To construct $\mathcal C_n$, we can proceed as follows. Partition $\{1,\dots,n-1\}$ as $P_1\sqcup P_2$, where $P_1=(\Bbb Z/n\Bbb Z)^\times=\{1\le i\le n-1:\text{gcd}(i,n)=1\}$ and $P_2=\{1\le i\le n-1:\text{gcd}(i,n)>1\}$. We denote the permutation group of any $P\subset\{1,\dots,n-1\}$ by $S_P$. Consider a tower of subgroups $G_n^*<S_{P_1}\times S_{P_2}<S_{n-1}$.
Then $S_{P_1\setminus\{1\}}\times S_{P_2}$ is a system of representatives of the left cosets of $G_n^*$ in $S_{P_1}\times S_{P_2}$. Let $\mathcal P=\{P\subset\{1,\dots,n-1\}: |P|=\phi(n)\}$. For each $P\in \mathcal P$, choose any $\sigma_P\in S_{n-1}$ such that $\sigma_P(P_1)=P$. Then $\{\sigma_P:P\in\mathcal P\}$ is a system of representatives of the left cosets of $S_{P_1}\times S_{P_2}$ in $S_{n-1}$. Therefore, we can choose
\begin{equation}\label{Cn}
\mathcal C_n=\{\sigma_P\alpha:\alpha\in S_{P_1\setminus\{1\}}\times S_{P_2}, P\in\mathcal P\},\quad n>2.
\end{equation}
For the formulas of $\Phi_n$ with $1\le n\le 6$, see the appendix A2.

We now determine a symmetrization of 
\begin{equation}\label{2.6}
\Psi_m\Bigl(\sum_{j=0}^{n/m-1}X_{mj},\sum_{j=0}^{n/m-1}X_{1+mj},\dots,\sum_{j=0}^{n/m-1}X_{m-1+mj}\Bigr),
\end{equation}
where $m\mid n$. Partition $\Bbb Z/n\Bbb Z$ into blocks $I_i=\{i+mj:0\le j\le n/m-1\}$, $0\le i\le m-1$ and write $Y_i=\sum_{a\in I_i}X_a$. For $G<S_m$, the wreath product $G \kern1pt \wr S_{n/m}$ is the subgroup of $S_n$ consisting of all permutations that are obtained as follows: first permute the blocks $I_1,\dots, I_m$ using a permutation from $G$; then permute the elements in each block $I_i$ independently. More precisely,
\[
G\wr S_{n/m}=\{(\sigma;\sigma_0,\dots,\sigma_{m-1}):\sigma\in G, \sigma_i\in S_{n/m}, 0\le i\le m-1\},
\]
where $(\sigma;\sigma_0,\dots,\sigma_{m-1})$ maps $i+mj$ to $\sigma(i)+m\sigma_i(j)$. Note that $(\sigma;\sigma_0,\dots,\sigma_{m-1})(Y_i)=Y_{\sigma(i)}$ for all $0\le i\le m-1$.

\begin{lem}\label{L2.2}
Let $m\mid n$ and $Y_i=\sum_{j=0}^{n/m-1}X_{i+mj}$. We have
\[
\text{\rm Stab}(\Psi_m(Y_0,\dots,Y_{m-1}))=\text{\rm Stab}(\Psi_m)\wr S_{n/m},
\]
\end{lem}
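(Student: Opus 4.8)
The plan is to establish the two inclusions separately, the first being routine and the second carrying all the content. For the inclusion $\text{Stab}(\Psi_m)\wr S_{n/m}\subseteq\text{Stab}(\Psi_m(Y_0,\dots,Y_{m-1}))$, let $\tau=(\sigma;\sigma_0,\dots,\sigma_{m-1})$ with $\sigma\in\text{Stab}(\Psi_m)$. Since $\tau(Y_i)=Y_{\sigma(i)}$ (as noted before the lemma), we get $\tau(\Psi_m(Y_0,\dots,Y_{m-1}))=\Psi_m(Y_{\sigma(0)},\dots,Y_{\sigma(m-1)})$, and this equals $\Psi_m(Y_0,\dots,Y_{m-1})$ because the polynomial identity $\Psi_m(Z_{\sigma(0)},\dots,Z_{\sigma(m-1)})=\Psi_m(Z_0,\dots,Z_{m-1})$ is precisely the assertion $\sigma\in\text{Stab}(\Psi_m)$, evaluated at $Z_i=Y_i$.

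For the reverse inclusion I would follow the pattern of the second half of the proof of Lemma~\ref{L2.1}. Writing $\bar a$ for the residue of $a$ modulo $m$ and substituting $Y_k=\sum_j X_{k+mj}$ into \eqref{2.3}, one obtains the factorization into linear forms
\[
\Psi_m(Y_0,\dots,Y_{m-1})=\prod_{i\in(\Bbb Z/m\Bbb Z)^\times}L_i,\qquad L_i=\sum_{a\in\Bbb Z/n\Bbb Z}\epsilon_m^{i\bar a}X_a.
\]
These $\phi(m)$ forms are pairwise non-proportional (their coefficients of $X_0$ are all $1$, while those of $X_1$ are the distinct values $\epsilon_m^i$), hence they constitute the full list of irreducible factors of $\Psi_m(Y_0,\dots,Y_{m-1})$ in the unique factorization domain $\Bbb C[X_0,\dots,X_{n-1}]$. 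Therefore any $\tau\in\text{Stab}(\Psi_m(Y_0,\dots,Y_{m-1}))$ permutes these factors up to scalars: $\tau(L_i)=c_iL_{\pi(i)}$ for some permutation $\pi$ of $(\Bbb Z/m\Bbb Z)^\times$ and some $c_i\in\Bbb C^\times$.

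The crux is then a comparison of coefficients. The coefficient of $X_b$ in $\tau(L_i)$ is $\epsilon_m^{i\,\overline{\tau^{-1}(b)}}$, while in $c_iL_{\pi(i)}$ it is $c_i\epsilon_m^{\pi(i)\bar b}$, so $\epsilon_m^{i\,\overline{\tau^{-1}(b)}}=c_i\epsilon_m^{\pi(i)\bar b}$ for all $b$ and all $i\in(\Bbb Z/m\Bbb Z)^\times$. Restricting to $b\in I_0$ gives $c_i=\epsilon_m^{i\,\overline{\tau^{-1}(b)}}$; as the left side is independent of $b$, the residue $\overline{\tau^{-1}(b)}$ is a constant $d$ for $b\in I_0$, whence $\tau^{-1}(I_0)=I_d$ and $c_i=\epsilon_m^{id}$. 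Substituting back yields, for all $b$ and $i$,
\[
i\,\overline{\tau^{-1}(b)}\equiv id+\pi(i)\bar b\pmod m.
\]
Taking $i=1$ gives $\overline{\tau^{-1}(b)}\equiv d+\pi(1)\bar b$; multiplying this by $i$, comparing with the general congruence, and specializing to $\bar b=1$ forces $\pi(i)=ai$ with $a:=\pi(1)\in(\Bbb Z/m\Bbb Z)^\times$. Consequently $\tau^{-1}$ carries each block $I_{\bar b}$ onto the single block $I_{a\bar b+d}$, so $\tau$ permutes the blocks $I_0,\dots,I_{m-1}$ and induces on their index set the affine permutation $\beta\mapsto a\beta+d$, an element of $G_m=\text{AGL}(1,\Bbb Z/m\Bbb Z)$. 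For $m>2$, Lemma~\ref{L2.1} gives $G_m=\text{Stab}(\Psi_m)$, and a permutation of $\Bbb Z/n\Bbb Z$ that respects the block partition with block action in $\text{Stab}(\Psi_m)$ is exactly an element of $\text{Stab}(\Psi_m)\wr S_{n/m}$; hence $\tau$ lies in this wreath product.

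I expect the main obstacle to be not any single hard step but the careful handling of the two ends of the argument. First, the coefficient comparison must be organized so as to disentangle the scalars $c_i$ from the permutation $\pi$ and to confirm that the induced block map is genuinely a bijective affine map. Second, the small moduli $m\le 2$ need separate care: there $\text{Stab}(\Psi_m)=\{\text{id}\}$ is strictly smaller than $G_m$, so the translation part $d$ must be shown to vanish; this uses the extra constraint $\prod_i c_i=1$ (which holds because $\tau$ fixes $\Psi_m(Y_0,\dots,Y_{m-1})$ on the nose, not merely up to a scalar) to conclude that $\tau$ fixes each block. Alternatively these cases are immediate by inspection, since $\Psi_1(Y_0)=\sum_jX_j$ and $\Psi_2(Y_0,Y_1)=\sum_jX_{2j}-\sum_jX_{1+2j}$ have the transparent stabilizers $S_n$ and $S_{n/2}\times S_{n/2}$.
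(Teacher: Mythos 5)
Your proof is correct, but it finishes differently from the paper's, and the difference is worth noting. Both arguments handle the easy inclusion identically and both exploit the factorization of $\Psi_m(Y_0,\dots,Y_{m-1})$ into the linear forms $L_i$ together with unique factorization and a coefficient comparison. The paper, however, uses the coefficient comparison only to prove the \emph{weaker} claim that a stabilizing permutation $\alpha$ permutes the blocks $I_0,\dots,I_{m-1}$ (by contradiction: if two elements $a,b$ of one block landed in different blocks $I_{i_1}\ne I_{i_2}$, the identity $\alpha\bigl(\sum_k\epsilon_m^kY_k\bigr)=t\sum_k\epsilon_m^{uk}Y_k$ would force $\epsilon_m^{ui_1}=\epsilon_m^{ui_2}$). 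Once $\alpha=(\sigma;\sigma_0,\dots,\sigma_{m-1})\in S_m\wr S_{n/m}$ is known, the paper concludes from $\Psi_m(Y_{\sigma(0)},\dots,Y_{\sigma(m-1)})=\Psi_m(Y_0,\dots,Y_{m-1})$ and the algebraic independence of $Y_0,\dots,Y_{m-1}$ that $\sigma\in\text{Stab}(\Psi_m)$ \emph{by definition}, whatever that stabilizer happens to be; this makes the proof uniform in $m$, with no appeal to Lemma~\ref{L2.1} and no case split. You instead extract from the coefficients the full affine structure $\overline{\tau^{-1}(b)}\equiv a\bar b+d \pmod m$ of the block action and then invoke Lemma~\ref{L2.1} to identify $G_m$ with $\text{Stab}(\Psi_m)$, which is precisely why you are forced to treat $m\le 2$ separately (there $\text{Stab}(\Psi_2)=\{\mathrm{id}\}\ne G_2$); your handling of those small cases (the scalar constraint forcing $d=0$, or direct inspection of $\Psi_1(Y_0)$ and $\Psi_2(Y_0,Y_1)$) is valid, so no gap results. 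What your route buys is more explicit information --- it re-derives, inside the block setting, that the stabilizer acts affinely on block indices; what the paper's route buys is brevity, uniformity in $m$, and independence from the actual computation of $\text{Stab}(\Psi_m)$, since the algebraic-independence step would work verbatim even if that group were unknown.
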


\begin{proof}
Let $\sigma\in\text{Stab}(\Psi_m)$ and $\sigma_i\in S_{n/m}$, $0\le i\le m-1$.
Then 
\begin{align*}
(\sigma;\sigma_0,\dots,\sigma_{m-1})(\Psi_m(Y_0,\dots,Y_{m-1}))\,&=\Psi_m(Y_{\sigma(0)},\dots,Y_{\sigma(m-1)})\cr
&=\Psi_m(Y_0,\dots,Y_{m-1}).
\end{align*}
Hence $\text{Stab}(\Psi_m)\wr S_{n/m}\subset \text{Stab}(\Psi_m(Y_0,\dots,Y_{m-1}))$.

On the other hand, assume that $\alpha\in \text{Stab}(\Psi_m(Y_0,\dots,Y_{m-1}))$. We first claim that $\alpha$ permutes the blocks $I_0,\dots,I_{m-1}$, i.e., $\alpha\in S_m\wr S_{n/m}$. Assume the contrary. Then there exist some $0\le i\le m-1$ and some $a,b\in I_i$ such that $\alpha(a)\in I_{i_1}$ and $\alpha(b)\in I_{i_2}$, where $i_1\ne i_2$.  Since $\alpha\in S_n$ fixes $\Psi_m(Y_0,\dots,Y_{m-1})$, we know that there exist $u\in(\Bbb Z/m\Bbb Z)^\times$ and $t\in\Bbb C^\times$ such that
\[
\alpha\Bigl(\sum_{k=0}^{m-1}\epsilon_m^kY_k\Bigr)=t\Bigl(\sum_{k=0}^{m-1}\epsilon_m^{uk}Y_k\Bigr).
\]
Comparing the coefficients of $X_{\alpha(a)}$ and $X_{\alpha(b)}$ in the above gives
\[
\epsilon_m^i(X_{\alpha(a)}+X_{\alpha(b)})=t(\epsilon_m^{ui_1}X_{\alpha(a)}+\epsilon_m^{ui_2}X_{\alpha(b)}),
\]
which is impossible. Hence the claim is proved. Now write $\alpha=(\sigma;\sigma_0,\dots,\sigma_{m-1})\in S_m\wr S_{n/m}$, where $\sigma\in S_m$ and $\sigma_i\in S_{n/m}$, $0\le i\le m-1$.  We have $\Psi_m(Y_0,\dots,Y_{m-1})=\alpha(\Psi_m(Y_0,\dots,Y_{m-1}))=\Psi_m(Y_{\sigma(0)},\dots,Y_{\sigma(m-1)})$. Since $Y_0,\dots,Y_{m-1}$ are independent indeterminates, it follows that $\sigma\in\text{Stab}(\Psi_m)$, i.e., $\alpha\in \text{Stab}(\Psi_m)\wr S_{n/m}$.
\end{proof}

For $m\mid n$, consider a tower of subgroups $\text{Stab}(\Psi_m)\wr S_{n/m}<S_m\wr S_{n/m}<S_n$ and recall that $\mathcal C_m$ is a system of representatives of the left cosets of $\text{Stab}(\Psi_m)$ in $S_m$. It is clear that
\[
\mathcal D_{n,m}:=\{(\sigma;\text{id},\dots,\text{id})\in S_m\wr S_{n/m}:\sigma\in \mathcal C_m\}
\]
is a system of representatives of the left cosets of $\text{Stab}(\Psi_m)\wr S_{n/m}$ in $S_m\wr S_{n/m}$. Let $\mathcal P_{n,m}$ be the set of all unordered partitions of $\{0,1,\dots,n-1\}$ into $m$ parts of size $n/m$. For $\{P_0,\dots,P_{m-1}\}\in\mathcal P_{n,m}$, choose a permutation $\phi_{\{P_0,\dots,P_{m-1}\}}\in S_n$ which maps $I_i$ to $P_i$, $0\le i\le m-1$. Then
\[
\mathcal E_{n,m}:=\{\phi_{\{P_0,\dots,P_{m-1}\}}:\{P_0,\dots,P_{m-1}\}\in\mathcal P_{n,m}\}
\]
is a system of representatives of the left cosets of $S_m\wr S_{n/m}$ in $S_n$. Therefore, 
\[
\{\beta\alpha:\alpha\in\mathcal D_{n,m}, \beta\in\mathcal E_{n,m}\}
\]
is a system of representatives of the left cosets of $S_m\wr S_{n/m}$ in $S_n$. Let
\begin{align}\label{theta}
\Theta_{n,m}:\,&=\prod_{\substack{\alpha\in\mathcal D_{n,m}\cr \beta\in\mathcal E_{n,m}}}\beta\alpha(\Psi_m(Y_0,\dots, Y_{m-1}))\\
&=\prod_{\{P_0,\dots,P_{m-1}\}\in\mathcal P_{n,m}}\phi_{\{P_0,\dots,P_{m-1}\}}(\Phi_m(Y_0,\dots,Y_{m-1})).\nonumber
\end{align} 
Then $\Theta_{n,m}$ is a symmetrization of $\Psi_m(Y_0,\dots,Y_{m-1})$. To see the second equality in \eqref{theta}, note that
\[
\prod_{\alpha\in\mathcal D_{n,m}}\alpha(\Psi_m(Y_0,\dots, Y_{m-1}))=\Bigl(\prod_{\sigma\in\mathcal C_m}\sigma(\Psi_m)\Big)(Y_0,\dots,Y_{m-1})=\Phi_m(Y_0,\dots,Y_{m-1}).
\]  
For the formulas of $\Theta_{n,m}$, $1\le n\le 6$, $m\mid n$, see the appendix A3.

Now 
\begin{equation}\label{Sigma}
\Sigma_n:=\prod_{m\mid n}\Theta_{n,m}\in\Bbb Z[X_0,\dots,X_{n-1}]
\end{equation}
is a symmetrization of $\Delta_n$.


\section{A Sufficient Condition for Normal Polynomials}

Let $s_i$ denote the $i$-th elementary symmetric polynomial in $X_0,\dots,X_{n-1}$. In \eqref{Sigma}, since $\Theta_{n,m}\in\Bbb Z[X_0,\dots,X_{n-1}]$ is a symmetric polynomial, there exists $\theta_{n,m}\in\Bbb Z[s_1,\dots,s_n]$ such that
\begin{equation}\label{Theta}
\Theta_{n,m}(X_0,\dots,X_{n-1})=\theta_{n,m} (s_1,\dots,s_n).
\end{equation}
For the formulas of $\theta_{n,m}$ with $1\le n\le 6$, $m\mid n$, $(n,m)\ne(6,6)$,  see the appendix A4. (For the entirety of $\theta_{5,5}$, $\theta_{6,2}$ and $\theta_{6,3}$, see \cite{Hou-arXiv2212.04978}.) $\theta_{6,6}$ is a polynomial of degree 120 in $s_1,\dots,s_6$ with at most 436140 terms, where 436140 is the number of nonnegative integer solutions of $x_1+2x_2+\cdots+6x_6=120$. The computation of $\theta_{6,6}$ is not impossible but will require enormous computing power.

Let 
\begin{equation}\label{h_n}
h_n=\prod_{m\mid n}\theta_{n,m}.
\end{equation}
Then
\begin{equation}\label{h}
\Sigma_n=h_n(s_1,s_2,\dots,s_n).
\end{equation}
Our main result is the following criterion for the normality of an irreducible polynomial in terms of its coefficients.

\begin{thm}\label{main}
Let $f=X^n+a_1X^{n-1}+\cdots+a_n\in\f_q[X]$ be irreducible. If $h_n(a_1,\dots,a_n)\ne 0$, where $h_n$ is defined in \eqref{h_n}, then $f$ is  normal over $\f_q$.
\end{thm}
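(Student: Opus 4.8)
The plan is to combine the Lidl--Niederreiter normality criterion with the divisibility that is built into the construction of $\Sigma_n$ in Section~2. First I would fix the roots $r_0=r,\dots,r_{n-1}=r^{q^{n-1}}$ of $f$ in $\f_{q^n}$ and recall from \cite[Lemma~3.51]{Lidl-Niederreiter-FF-1997} that $f$ is normal over $\f_q$ precisely when $\Delta_n(r_0,\dots,r_{n-1})\ne 0$. Everything therefore reduces to producing a nonvanishing quantity that controls $\Delta_n(r_0,\dots,r_{n-1})$ and that is expressible through the coefficients $a_1,\dots,a_n$.

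The central input is that $\Sigma_n$, defined in \eqref{Sigma}, is a symmetrization of $\Delta_n$: by the factorization \eqref{2.4} together with the fact that each $\Theta_{n,m}$ is a symmetrization of the corresponding factor $\Psi_m(Y_0,\dots,Y_{m-1})$, we obtain $\Delta_n\mid\Sigma_n$ in $\Bbb Z[X_0,\dots,X_{n-1}]$. Write $\Sigma_n=\Delta_n\,Q$; integrality of $Q$ follows from Gauss's lemma, since $\Delta_n$ is primitive (its coefficient of $X_0^n$ is $1$, that monomial arising only from the identity permutation). Reducing this identity modulo $p=\text{char}\,\f_q$ and then evaluating at $(r_0,\dots,r_{n-1})$ gives $\Sigma_n(r_0,\dots,r_{n-1})=\Delta_n(r_0,\dots,r_{n-1})\,Q(r_0,\dots,r_{n-1})$ in $\f_{q^n}$, so $\Sigma_n(r_0,\dots,r_{n-1})\ne 0$ already forces $\Delta_n(r_0,\dots,r_{n-1})\ne 0$.

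It remains to identify $\Sigma_n(r_0,\dots,r_{n-1})$ with $h_n(a_1,\dots,a_n)$ up to a unit. By \eqref{h} we have $\Sigma_n=h_n(s_1,\dots,s_n)$, and Vieta's formulas give $s_i(r_0,\dots,r_{n-1})=(-1)^i a_i$, whence $\Sigma_n(r_0,\dots,r_{n-1})=h_n(-a_1,a_2,\dots,(-1)^na_n)$. The one genuine subtlety is the sign, which I would resolve by homogeneity. Since $\Delta_n$ is homogeneous of degree $n$, so is its symmetrization $\Sigma_n$, say of degree $D=\sum_{m\mid n}\deg\Theta_{n,m}$. As $s_i$ carries weight $i$, every monomial appearing in $h_n(s_1,\dots,s_n)$ has total weight $D$, so the substitution $s_i\mapsto(-1)^is_i$ multiplies $h_n$ uniformly by $(-1)^D$; that is, $h_n(-a_1,a_2,\dots,(-1)^na_n)=(-1)^D h_n(a_1,\dots,a_n)$. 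Hence $\Sigma_n(r_0,\dots,r_{n-1})=(-1)^D h_n(a_1,\dots,a_n)$, and the hypothesis $h_n(a_1,\dots,a_n)\ne 0$ yields $\Sigma_n(r_0,\dots,r_{n-1})\ne 0$, so $\Delta_n(r_0,\dots,r_{n-1})\ne 0$ and $f$ is normal.

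Because Section~2 already does the heavy lifting---establishing that $\Sigma_n$ is a symmetric multiple of $\Delta_n$ and hence equals $h_n(s_1,\dots,s_n)$---the remaining argument is short. I expect the only place demanding care to be the passage from the elementary symmetric polynomials $s_i$ to the coefficients $a_i$, where the sign must be tracked; the homogeneity observation dispatches this cleanly. The only other point to verify is that the divisibility identity genuinely holds over $\Bbb Z$ (so that it survives reduction modulo $p$), rather than merely over $\Bbb Q$ or $\Bbb C$, which is precisely why the primitivity of $\Delta_n$ is recorded above.
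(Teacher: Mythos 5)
Your proof is correct and follows essentially the same route as the paper: both rest on the divisibility $\Delta_n\mid\Sigma_n$, the identity $\Sigma_n=h_n(s_1,\dots,s_n)$, and the Lidl--Niederreiter determinant criterion. The only difference is the sign bookkeeping between $s_i$ and $a_i$: the paper sidesteps it by evaluating $\Sigma_n$ at $(-r_0,\dots,-r_{n-1})$, where $s_i=a_i$ exactly, and then noting that $f(-X)$ is normal if and only if $f(X)$ is, whereas you evaluate at the roots themselves and absorb the signs via the (valid) isobaric-weight argument that $h_n((-1)s_1,\dots,(-1)^ns_n)=(-1)^Dh_n(s_1,\dots,s_n)$.
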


\begin{proof}
Let $r\in\f_{q^n}$ be a root of $f$, and let $r_0=r, r_1=r^q,\dots,r_{n-1}=r^{q^{n-1}}$, which are all the roots of $f$. By \eqref{h},
\[
\Sigma_n(-r_0,\dots,-r_{n-1})=h_n(a_1,\dots,a_n)\ne 0.
\]
Since $\Delta_n\mid \Sigma_n$, we have $\Delta_n(-r_0,\dots,-r_{n-1})\ne 0$. Thus $f(-X)$ is a normal polynomial over $\f_q$ and so is $f(X)$.
\end{proof}

The above result holds in a more general setting. Let $F$ be any field. Let $f\in F[x]$ be a separable irreducible polynomial of degree $n$ and let $K$ be its splitting field over $F$. Such an $f$ is called a {\em normal polynomial} over $F$ if its roots form a basis (hence a normal basis) of the $K$ over $F$. 

\begin{lem}\label{L3.2} Let $K/F$ be a finite Galois extension of degree $n$ with Galois group $\text{\rm Aut}(K/F)=\{\sigma_1,\dots,\sigma_n\}$. Then $x_1,\dots,x_n\in K$ are linearly independent over $F$ if and only if
\[
\det\left[
\begin{matrix} 
\sigma_1(x_1)&\sigma_1(x_2)&\cdots &\sigma_1(x_n)\cr
\sigma_2(x_1)&\sigma_2(x_2)&\cdots &\sigma_2(x_n)\cr
\vdots&\vdots&&\vdots\cr
\sigma_n(x_1)&\sigma_n(x_2)&\cdots &\sigma_n(x_n)
\end{matrix}\right]\ne 0.
\]
\end{lem}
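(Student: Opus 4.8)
The plan is to prove the equivalence as two one-directional implications, the nontrivial one resting on the classical linear independence of distinct field automorphisms. Throughout, write $M=\bigl[\sigma_i(x_j)\bigr]_{1\le i,j\le n}$ for the matrix appearing in the statement.

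First I would dispose of the implication $\det M\ne 0\Rightarrow$ linear independence by proving its contrapositive. Suppose $x_1,\dots,x_n$ are linearly \emph{dependent} over $F$, say $\sum_{j=1}^n c_jx_j=0$ with $c_1,\dots,c_n\in F$ not all zero. Applying each $\sigma_i$ and using that $\sigma_i$ fixes $F$ pointwise yields $\sum_{j=1}^n c_j\sigma_i(x_j)=0$ for every $i$, so the nonzero vector $(c_1,\dots,c_n)$ lies in the kernel of $M$, and hence $\det M=0$. This step is routine and uses nothing beyond $\sigma_i|_F=\text{id}$.

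For the converse, suppose $x_1,\dots,x_n$ are linearly independent over $F$; since $\dim_F K=n$, they then form an $F$-basis of $K$. Assume toward a contradiction that $\det M=0$, so the rows of $M$ are linearly dependent over $K$: there exist $d_1,\dots,d_n\in K$, not all zero, with $\sum_{i=1}^n d_i\sigma_i(x_j)=0$ for all $j$. Any $y\in K$ can be written as $y=\sum_{j=1}^n c_jx_j$ with $c_j\in F$, and $F$-linearity of the $\sigma_i$ then gives $\sum_{i=1}^n d_i\sigma_i(y)=\sum_{j=1}^n c_j\sum_{i=1}^n d_i\sigma_i(x_j)=0$. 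Thus the map $\sum_{i=1}^n d_i\sigma_i$ vanishes identically on $K$.

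The crux, and the step I expect to be the main obstacle, is to contradict this last identity. It asserts a nontrivial $K$-linear relation among the distinct automorphisms $\sigma_1,\dots,\sigma_n$, which is forbidden by the Dedekind--Artin theorem on the linear independence of characters of a group into a field (here the characters $\sigma_i\colon K^\times\to K^\times$). Invoking that theorem forces all $d_i=0$, a contradiction, so $\det M\ne 0$. Together with the second paragraph this establishes the equivalence, since each tuple $x_1,\dots,x_n$ is either dependent (whence $\det M=0$) or independent (whence $\det M\ne 0$). As an alternative to the character-independence input, one could instead observe that $M^{\mathsf{T}}M=\bigl[\text{Tr}_{K/F}(x_ix_j)\bigr]$, reducing the claim to nondegeneracy of the trace form of the separable extension $K/F$; the character route is more self-contained and I would prefer it.
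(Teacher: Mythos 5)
Your proof is correct and takes essentially the same route as the paper's: the easy direction is the same kernel computation, and for the hard direction you turn a row dependence of $M$ into a vanishing $K$-linear combination $\sum_i d_i\sigma_i$ on a basis (hence on all of $K$) and invoke Dedekind--Artin linear independence of characters, which is exactly the paper's argument. The differences (contrapositive versus contradiction phrasing, row dependence versus left null vector) are cosmetic.
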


\begin{proof}
($\Rightarrow$) 
Assume that $(a_1,\dots,a_n)\in K^n$ is such that 
\[
(a_1,\dots,a_n)\left[
\begin{matrix} 
\sigma_1(x_1)&\sigma_1(x_2)&\cdots &\sigma_1(x_n)\cr
\sigma_2(x_1)&\sigma_2(x_2)&\cdots &\sigma_2(x_n)\cr
\vdots&\vdots&&\vdots\cr
\sigma_n(x_1)&\sigma_n(x_2)&\cdots &\sigma_n(x_n)
\end{matrix}\right]=0.
\]
This means that $a_1\sigma_1+\cdots+a_n\sigma_n=0$ as a function from $K$ to $K$ since $x_1,\dots,x_n$ form a basis of $K$ over $F$. By the linear independence of characters \cite[Ch.VI, Theorem~4.1]{Lang-2002}, we have $a_1=\cdots=a_n=0$.

\medskip
($\Leftarrow$) Assume to the contrary that there exists $b_1,\dots,b_n\in F$, not all $0$, such that $a_1x_1+\cdots+a_nx_n=0$. Then 
\[
\left[
\begin{matrix} 
\sigma_1(x_1)&\sigma_1(x_2)&\cdots &\sigma_1(x_n)\cr
\sigma_2(x_1)&\sigma_2(x_2)&\cdots &\sigma_2(x_n)\cr
\vdots&\vdots&&\vdots\cr
\sigma_n(x_1)&\sigma_n(x_2)&\cdots &\sigma_n(x_n)
\end{matrix}\right]
\left[\begin{matrix}b_1\cr\vdots\cr b_n\end{matrix}\right]=0,
\]
which is a contradiction.
\end{proof}

\begin{thm}\label{T3.3}
Let $F$ be any field and let $f=X^n+a_1X^{n-1}+\cdots+a_n\in F[X]$ be separable and irreducible with a cyclic Galois group over $F$. If $h_n(a_1,\dots,a_n)\ne 0$, where $h$ is defined in \eqref{h_n}, then $f$ is normal over $F$.
\end{thm}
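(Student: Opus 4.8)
The plan is to mirror the proof of Theorem~\ref{main}, replacing its finite-field input (Lemma~3.51 of \cite{Lidl-Niederreiter-FF-1997}) with Lemma~\ref{L3.2} and using cyclicity to recover the circulant structure of $\Delta_n$. Let $K$ be the splitting field of $f$; since $f$ is separable and irreducible, $K/F$ is Galois of degree $n$, and by hypothesis $\text{Aut}(K/F)$ is cyclic, say generated by $\sigma$, so that $\text{Aut}(K/F)=\{\sigma^0,\dots,\sigma^{n-1}\}$. Fixing a root $r\in K$, the elements $r_i:=\sigma^i(r)$, $0\le i\le n-1$, form a single Galois orbit of size $n$ and are therefore precisely the $n$ distinct roots of $f$.

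First I would apply Lemma~\ref{L3.2} with $x_j=r_j$ and $\sigma_k=\sigma^k$ ($0\le k,j\le n-1$): the roots are linearly independent over $F$ if and only if the determinant of the matrix with $(k,j)$-entry $\sigma^k(r_j)=\sigma^{k+j}(r)=r_{k+j}$ (indices modulo $n$) is nonzero. The key step is to recognize this matrix as a row-permutation of the circulant in \eqref{2.1} evaluated at $(r_0,\dots,r_{n-1})$: reindexing the rows by $k\mapsto -k\bmod n$ sends the entry $r_{k+j}$ to $r_{j-k}$, which is exactly the $(k,j)$-entry of that circulant. As a row permutation alters the determinant only by a sign, I conclude $\det[\sigma^k(r_j)]=\pm\,\Delta_n(r_0,\dots,r_{n-1})$, so linear independence of the roots is equivalent to $\Delta_n(r_0,\dots,r_{n-1})\ne 0$.

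With this circulant criterion established, the rest runs exactly as in Theorem~\ref{main}. Since $\Delta_n$ is homogeneous of degree $n$, the value $\Delta_n(-r_0,\dots,-r_{n-1})=(-1)^n\Delta_n(r_0,\dots,r_{n-1})$ vanishes precisely when $\Delta_n(r_0,\dots,r_{n-1})$ does. The elementary symmetric functions satisfy $s_i(-r_0,\dots,-r_{n-1})=a_i$, so by \eqref{h},
\[
\Sigma_n(-r_0,\dots,-r_{n-1})=h_n(a_1,\dots,a_n)\ne 0.
\]
As $\Delta_n\mid\Sigma_n$ by \eqref{Sigma}, this forces $\Delta_n(-r_0,\dots,-r_{n-1})\ne 0$, hence $\Delta_n(r_0,\dots,r_{n-1})\ne 0$, and therefore the roots are linearly independent over $F$; that is, $f$ is normal over $F$.

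I expect the sole point requiring genuine care to be the identification in the second paragraph. Ordering the automorphisms as consecutive powers $\sigma^0,\dots,\sigma^{n-1}$ of a single generator is precisely what aligns the matrix of Lemma~\ref{L3.2} with the circulant $\Delta_n$, and this ordering is available only because the Galois group is cyclic. For a noncyclic group the automorphisms cannot be indexed as the powers of one element, the circulant structure is lost, and $\Delta_n$ no longer governs linear independence, so the cyclicity hypothesis is genuinely needed rather than a mere convenience.
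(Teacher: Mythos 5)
Your proposal is correct and takes essentially the same approach as the paper: the paper's proof of Theorem~\ref{T3.3} simply states that, using Lemma~\ref{L3.2}, the argument is identical to that of Theorem~\ref{main}, and your write-up is exactly that argument with the omitted details supplied (ordering the cyclic Galois group as powers of a generator, identifying the matrix $[\sigma^k(r_j)]$ with the circulant $\Delta_n$ up to a row permutation and sign, and the homogeneity bookkeeping for the substitution $r_i\mapsto -r_i$).
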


\begin{proof}
Using Lemma~\ref{L3.2}, the proof of is identical to that of Theorem~\ref{main}.
\end{proof}

\medskip
\noindent{\bf Remark.} Let $f=X^n+a_1X^{n-1}+\cdots+a_n=\prod_{i=0}^{n-1}(X-\alpha_i)$. It is not difficult to see that $h_n(a_1,\dots,a_n)\ne 0$ if and only if 
\[
\Delta_n(\alpha_{\sigma(0)},\dots,\alpha_{\sigma(n-1)})\ne 0\quad\text{for all}\ \sigma\in S_n.
\]


\section{A $p$-ary Version}

Let $\text{char}\,\f_q=p$. The polynomial $h_n$ in \eqref{h_n} is independent of $p$. When taking $p$ into consideration, we get a characteristic specific version of $h_n$ which is simpler but has the same property.

Write $n=p^em$, where $p\nmid m$ and let $\varepsilon_m$ be a primitive $m$-th root of unity (in some extension of $\f_p$). The polynomial $\Delta_n$ defined in \eqref{2.1} is now treated as a polynomial over $\f_p$. The $p$-ary version of $\Psi_m$ in \eqref{2.3} is  
\begin{equation}\label{Psi-p}
\Psi_{p,m}=\prod_{i\in(\Bbb Z/m\Bbb Z)^\times}\Bigl(\sum_{j\in\Bbb Z/m\Bbb Z}\varepsilon_m^{ij}X_j\Bigr).
\end{equation}
(Note that $\Psi_m$ is defined only when $p\nmid m$.) In fact, $\Psi_{p,m}$ is the reduction of $\Psi_m$ modulo $p$. To see this claim, let $\frak p$ be a prime of $\Bbb Z[\epsilon_m]$ (the ring of integers of $\Bbb Q(\epsilon_m)$) lying above $p$ and treat $\f_p(\varepsilon_m)$ as $\Bbb Z[\epsilon_m]/\frak p$. 

We have
\begin{align}\label{delta-p}
\Delta_n(X_0,\dots,X_n)\,&=\prod_{i\in\Bbb Z/m\Bbb Z}\Bigl(\sum_{j\in\Bbb Z/m\Bbb Z}\varepsilon_m^{ij}(X_j+X_{j+m}+\cdots+X_{j+(p^e-1)m})\Bigr)^{p^e}\\
&=\Delta_m(Y_0,\dots, Y_{m-1})^{p^e},\nonumber
\end{align}
where $Y_j=\sum_{k=0}^{p^e-1}X_{j+km}$. Moreover,
\begin{align}\label{Delta_mY}
\Delta_m(Y_0,\dots, Y_{m-1})\,&=\prod_{l\mid m}\Psi_{p,l}\Bigl(\;\sum_{j\equiv 0\,\text{(mod\, $l$)}}Y_j,\;\sum_{j\equiv 1\,\text{(mod\, $l$)}}Y_j,\;\dots,\; \sum_{j\equiv l-1\,\text{(mod\, $l$)}}Y_j\Bigr),\\
&=\prod_{l\mid m}\Psi_{p,l}(Z_0,\dots,Z_{l-1}),\nonumber
\end{align}
where
\[
Z_k=\sum_{\substack{0\le j\le m-1\cr j\equiv k\,\text{(mod\, $l$)}}}Y_j=\sum_{\substack{0\le i\le n-1\cr i\equiv k\,\text{(mod\, $l$)}}}X_i.
\]
The computations in Section 3 carry through almost entirely in the case of characteristic $p$. For $l\mid m$, let $\Theta_{p,n,l}$ and $\theta_{p,n,l}$ be the reductions of $\Theta_{n,l}$ and $\theta_{n,l}$ modulo $p$, respectively. Then $\Theta_{p,n,l}$ is a symmetrization of $\Psi_{p,l}(Z_0,\dots,Z_{l-1})$ and $\prod_{l\mid m}\Theta_{p,n,l}$
is a symmetrization of $\Delta_m(Y_0,\dots,Y_{m-1})$. Moreover
\[
\Theta_{p,n,l}(X_0,\dots,X_{n-1})=\theta_{p,n,l} (s_1,\dots,s_n),
\]
Let
\begin{equation}\label{hpn} 
h_{p,n}=\prod_{l\mid m}\theta_{p,n,l}.
\end{equation}
Then we have the following analogue of Theorem~\ref{main}.

\begin{thm}\label{T4.1}
Let $f=X^n+a_1X^{n-1}+\cdots+a_n\in\f_q[X]$ be irreducible, where $\text{\rm char}\,\f_q=p$. If $h_{p,n}(a_1,\dots,a_n)\ne n$, then $f$ is  normal over $\f_q$.
\end{thm}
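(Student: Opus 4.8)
The plan is to replay the proof of Theorem~\ref{main} in characteristic $p$, substituting the factorization \eqref{delta-p}--\eqref{Delta_mY} of $\Delta_n$ for the single divisibility $\Delta_n\mid\Sigma_n$ used there. First I would fix a root $r\in\f_{q^n}$ of $f$ and list all the roots as $r_0=r,r_1=r^q,\dots,r_{n-1}=r^{q^{n-1}}$. Writing $s_i$ for the $i$-th elementary symmetric polynomial, the factorization $f=\prod_{i=0}^{n-1}(X-r_i)$ gives $a_i=(-1)^is_i(r_0,\dots,r_{n-1})$, whence $s_i(-r_0,\dots,-r_{n-1})=(-1)^is_i(r_0,\dots,r_{n-1})=a_i$ for every $i$, exactly as in Theorem~\ref{main}.

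Next I would record the relevant divisibility over $\f_p$. Set $H_{p,n}:=\prod_{l\mid m}\Theta_{p,n,l}$. Each $\Theta_{p,n,l}$ is the reduction modulo $p$ of the symmetric polynomial $\Theta_{n,l}$, so $H_{p,n}$ is symmetric and $H_{p,n}(X_0,\dots,X_{n-1})=h_{p,n}(s_1,\dots,s_n)$ with $h_{p,n}=\prod_{l\mid m}\theta_{p,n,l}$ as in \eqref{hpn}. Since $\Theta_{p,n,l}$ is a symmetrization of $\Psi_{p,l}(Z_0,\dots,Z_{l-1})$, we have $\Psi_{p,l}(Z_0,\dots,Z_{l-1})\mid\Theta_{p,n,l}$; taking the product over $l\mid m$ and invoking \eqref{Delta_mY} shows that $\Delta_m(Y_0,\dots,Y_{m-1})=\prod_{l\mid m}\Psi_{p,l}(Z_0,\dots,Z_{l-1})$ divides $H_{p,n}$ in $\f_p[X_0,\dots,X_{n-1}]$, where $Y_j=\sum_{k=0}^{p^e-1}X_{j+km}$ and $Z_k=\sum_{i\equiv k\,(\mathrm{mod}\,l)}X_i$.

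The conclusion then follows by evaluation at $X_i=-r_i$. By hypothesis $H_{p,n}(-r_0,\dots,-r_{n-1})=h_{p,n}(a_1,\dots,a_n)\ne0$, so the factor $\Delta_m(Y_0,\dots,Y_{m-1})$ is also nonzero at $(-r_0,\dots,-r_{n-1})$. The Frobenius identity \eqref{delta-p} holds over $\f_p$, hence after specialization in $\f_{q^n}$, and gives $\Delta_n(-r_0,\dots,-r_{n-1})=\bigl[\Delta_m(Y_0,\dots,Y_{m-1})\bigr]^{p^e}\ne0$. By the determinant criterion recalled after \eqref{2.1} (see \cite[Lemma~3.51]{Lidl-Niederreiter-FF-1997}), the roots $-r_0,\dots,-r_{n-1}$ of $f(-X)$ are linearly independent over $\f_q$, so $f(-X)$, and therefore $f(X)$, is normal over $\f_q$.

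The bookkeeping above is routine once the two supporting facts are granted, so the step I expect to require the most care—the genuine obstacle—is the assertion that $\Theta_{p,n,l}$, defined merely as the reduction of $\Theta_{n,l}$ modulo $p$, really is a symmetrization of $\Psi_{p,l}(Z_0,\dots,Z_{l-1})$; that is, that the divisibility $\Psi_{p,l}(Z_0,\dots,Z_{l-1})\mid\Theta_{p,n,l}$ does not degenerate under reduction. This rests on $\Psi_{p,l}$ being exactly the reduction of $\Psi_l$ modulo a prime $\frak p$ of $\Bbb Z[\epsilon_l]$ above $p$, which is available precisely because $p\nmid m$ (hence $p\nmid l$), so that $\varepsilon_l$ remains a primitive $l$-th root of unity in characteristic $p$ and the stabilizer computation of Lemma~\ref{L2.1} together with the coset construction \eqref{Cn} transfers verbatim. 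Once this compatibility of reduction with the orbit-product construction is in place, every divisibility used above survives reduction modulo $p$ and the proof is complete.
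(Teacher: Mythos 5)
Your proposal is correct and follows essentially the same route as the paper: the paper prints no separate proof of Theorem~\ref{T4.1}, because the Section~4 development preceding it (the reduction of $\Psi_l$, $\Theta_{n,l}$, $\theta_{n,l}$ modulo a prime $\frak p$ of $\Bbb Z[\epsilon_l]$ above $p$, the divisibility of $\prod_{l\mid m}\Theta_{p,n,l}$ by $\Delta_m(Y_0,\dots,Y_{m-1})$, and the identity $\Delta_n=\Delta_m(Y_0,\dots,Y_{m-1})^{p^e}$) is exactly the argument you assemble, finished by evaluation at $(-r_0,\dots,-r_{n-1})$ as in Theorem~\ref{main}. You also correctly read the condition ``$h_{p,n}(a_1,\dots,a_n)\ne n$'' in the statement as the intended ``$h_{p,n}(a_1,\dots,a_n)\ne 0$'', and your identification of the mod-$p$ compatibility of the symmetrization as the one point needing care matches the paper's own emphasis.
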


\begin{rmk}\label{R4.2}\rm 
(i) For $l\mid m$ and $0\le t\le e$, the mod $p$ reduction of $\Theta_{n,p^tl}$ is a power of $\Theta_{p,n,l}$, hence the mod $p$ reduction of $\theta_{n,p^tl}$ is a power of $\theta_{p,n,l}$. Therefore, for $a_1,\dots,a_n\in\f_q$, $h_n(a_1,\dots,a_n)\ne0$ if and only if $h_{p,n}(a_1,\dots,a_n)\ne0$.

\medskip

(ii) Let $f=X^n+a_1X^{n-1}+\cdots+a_n\in\f_q[X]$ be irreducible. A necessary condition for $f$ to be normal over $\f_q$ is that the sum of its roots is nonzero, i.e., $a_1\ne 0$. When $n=p^e$, i.e., $m=1$, we have $h_{p,n}(a_1,\dots,a_n)=a_1$. Thus in this case, $f$ is normal if and only if $a_1\ne 0$. This fact was first proved by Perlis \cite{Perlis-DMJ-1942}.  

\medskip

(iii) Assume that $n$ is a prime different from $\text{char}\,\f_q$. Then \eqref{Delta_mY} becomes 
\[
\Delta_n(X_0,\dots,X_{n-1})=(X_0+\cdots+X_{n-1})\Psi_{p,n}(X_0,\cdots,X_{n-1}).
\]
Further assume that $q$ is a generator of $(\Bbb Z/n\Bbb Z)^\times$. Let $\alpha\in\f_{q^n}\setminus\f_q$. We claim that 
\[
\Psi_{p,n}(\alpha,\alpha^q,\dots,\alpha^{q^{n-1}})\ne0.
\]
Assume the contrary. Then $\sum_{j\in\Bbb Z/n\Bbb Z}\varepsilon_n^{ij}\alpha^{q^j}=0$ for some $i\in(\Bbb Z/n\Bbb Z)^\times$. Raising this equation to the power $q^k$, $k\in\Bbb N$, gives 
\[
0=\sum_{j\in\Bbb Z/n\Bbb Z}\varepsilon_n^{q^kij}\alpha^{q^{j+k}}=\sum_{j\in\Bbb Z/n\Bbb Z}\varepsilon_n^{q^ki(j-k)}\alpha^{q^j}=\varepsilon_n^{-ikq^k}\sum_{j\in\Bbb Z/n\Bbb Z}\varepsilon_n^{q^kij}\alpha^{q^j},
\]
i.e.,
$\sum_{j\in\Bbb Z/n\Bbb Z}\varepsilon_n^{q^kij}\alpha^{q^j}=0$.  Since $q$ generates $(\Bbb Z/n\Bbb Z)^\times$, it follows that that 
\[
\sum_{j\in\Bbb Z/n\Bbb Z}\varepsilon_n^{ij}\alpha^{q^j}=0
\]
for {\em all} $i\in(\Bbb Z/n\Bbb Z)^\times$. Consequently, $\alpha=\alpha^q=\cdots=\alpha^{q^{n-1}}$, which is a contradiction. Now, $\Delta_n(\alpha,\alpha^q,\dots,\alpha^{q^{n-1}})\ne 0$ if and only if $\alpha+\alpha^q+\dots+\alpha^{q^{n-1}}\ne 0$.
Therefore, in this case, $a_1\ne 0$ is also a necessary and sufficient condition for $f$ (irreducible) to be normal over $\f_q$. This result was proved first proved by Pei et. al. \cite{Pei-Wang-Omura-IEEE-IT-1986} for $q=2$.

\medskip

(iv) Chang et. al. \cite{Chang-Truong-Reed-JA-2001} proved the converse of (ii) and (iii) in the following sense: If every degree $n$ irreducible polynomial over $\f_q$ with nonzero trace is normal over $\f_q$, then $n$ is either a power of $p\,(\,=\text{char}\,\f_q)$ or a prime different from $p$ such that $q$ is a generator of $(\Bbb Z/n\Bbb Z)^\times$.

\end{rmk}


\section{Number of Normal Polynomials}

Let $N(q,n)$ denote the number of monic normal polynomials of degree $n$ over $\f_q$. This number is known; see Proposition~\ref{P4.1} below. There are at least two proofs, given in \cite{Akbik-JNT-1992} and \cite[Theorem~3.73]{Lidl-Niederreiter-FF-1997}, respectively. We include a short proof which only uses linear algebra. Let $p=\text{char}\,\f_q$ and $n=p^et$, where $e\ge 0$ and $t>0$ are integers such that $p\nmid t$. Let $X^t-1=Q_1\cdots Q_k$ be the factorization of $X^t-1$ over $\f_q[X]$ and let $d_i=\deg Q_i$. The integers $d_1,\dots,d_k$ are the sizes of the orbits in $\Bbb Z/t\Bbb Z$ under the multiplication by powers of $q$, i.e., the sizes of the $q$-cyclotomic cosets modulo $t$. For $d\mid t$, let $o_d(q)$ denote the order of $q$ in $(\Bbb Z/d\Bbb Z)^\times$. Then the multiset $\{d_1,\dots,d_k\}$ consists of $o_d(q)$ with multiplicity $\phi(d)/o_d(q)$ for all $d\mid t$. 

\begin{prop}\label{P4.1}
In the above notation, we have
\[
N(q,n)=\frac 1n q^{(p^e-1)t}\prod_{i=1}^k(q^{d_i}-1)=\frac 1n q^{(p^e-1)t}\prod_{d\mid t}(q^{o_d(q)}-1)^{\phi(d)/o_d(q)}.
\]
\end{prop}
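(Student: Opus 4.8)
The plan is to count normal \emph{elements} of $\f_{q^n}$ over $\f_q$ and then divide by $n$. If $\alpha\in\f_{q^n}$ is a normal element, then $\{\alpha,\alpha^q,\dots,\alpha^{q^{n-1}}\}$ is an $\f_q$-basis; in particular the conjugates are distinct, so $\alpha$ has degree $n$ and its minimal polynomial is a monic normal polynomial of degree $n$. Conversely, the roots of a monic normal polynomial of degree $n$ are precisely the $n$ distinct conjugates $\alpha,\alpha^q,\dots,\alpha^{q^{n-1}}$ of any one root, and each conjugate is again normal (the Frobenius permutes the basis). Hence the normal elements are partitioned into classes of size $n$, one per monic normal polynomial, and $N(q,n)=\frac1n\,|\{\text{normal elements of }\f_{q^n}\}|$.

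First I would describe the normal elements algebraically. Let $\tau\colon x\mapsto x^q$ be the Frobenius, viewed as an $\f_q$-linear operator on $\f_{q^n}$, and make $\f_{q^n}$ a module over $A:=\f_q[X]/(X^n-1)$ by letting $X$ act as $\tau$. Since $\tau^n=\mathrm{id}$ and, by the existence of a normal basis, $\tau$ admits a cyclic vector, the minimal polynomial of $\tau$ has degree $n$ and hence equals $X^n-1$; thus $\f_q[\tau]\cong A$ and $\f_{q^n}$ is a free $A$-module of rank one, i.e.\ $\f_{q^n}\cong A$ as $A$-modules. Under this isomorphism, $\alpha$ is normal iff $A\alpha=\f_{q^n}$, i.e.\ iff $\alpha$ is a generator of the cyclic module $A$ over itself; for a commutative ring these generators are exactly the units. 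Therefore the number of normal elements equals $|A^\times|$.

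Next I would compute $|A^\times|$ by factoring. Since $n=p^e t$ with $p\nmid t$, in characteristic $p$ we have $X^n-1=(X^t-1)^{p^e}=Q_1^{p^e}\cdots Q_k^{p^e}$, the $Q_i$ being distinct irreducibles as $X^t-1$ is separable. The Chinese Remainder Theorem gives $A\cong\prod_{i=1}^k\f_q[X]/(Q_i^{p^e})$. Each factor $R_i:=\f_q[X]/(Q_i^{p^e})$ is a finite local ring with residue field $\f_{q^{d_i}}$ and maximal ideal $(Q_i)$, where $d_i=\deg Q_i$; counting gives $|R_i|=q^{d_i p^e}$ and $|R_i^\times|=q^{d_i p^e}-q^{d_i(p^e-1)}=q^{d_i(p^e-1)}(q^{d_i}-1)$. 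Multiplying over $i$ and using $\sum_i d_i=\deg(X^t-1)=t$ yields $|A^\times|=q^{(p^e-1)t}\prod_{i=1}^k(q^{d_i}-1)$, which after dividing by $n$ is the first formula. The second formula is then immediate from the multiset description recalled before the statement: since the value $o_d(q)$ occurs among $d_1,\dots,d_k$ with multiplicity $\phi(d)/o_d(q)$ as $d$ ranges over the divisors of $t$, the product $\prod_{i=1}^k(q^{d_i}-1)$ equals $\prod_{d\mid t}(q^{o_d(q)}-1)^{\phi(d)/o_d(q)}$.

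The main obstacle is the second step, the identification of the normal elements with $A^\times$. It hinges on the normal basis theorem (to obtain freeness of rank one, hence $\f_{q^n}\cong A$) together with the elementary fact that the generators of a commutative ring as a module over itself are precisely its units. Once this identification is in place, the remaining steps reduce to routine counting in finite local rings.
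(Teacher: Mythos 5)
Your proof is correct, and while it shares the same skeleton as the paper's (count normal elements, i.e.\ cyclic vectors of the Frobenius, then divide by $n$; decompose along $X^n-1=\prod_iQ_i^{p^e}$), the execution is genuinely different. The paper works with linear algebra: it notes that the minimal polynomial of the Frobenius $\rho$ is $X^n-1$, reads off the elementary divisors $Q_i^{p^e}$, replaces $\rho$ by a block-diagonal matrix $M$ of companion matrices $M_i$, characterizes normal elements as those $x$ with $g(M)x\ne 0$ for every proper divisor $g$ of $X^n-1$, reduces this condition block by block, and counts using a nullity formula for $\bigl((X^n-1)/Q_i\bigr)(M_i)$ cited from the author's book. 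You instead make $\f_{q^n}$ a module over $A=\f_q[X]/(X^n-1)$, invoke the normal basis theorem to get $\f_{q^n}\cong A$ as $A$-modules, identify normal elements with $A^\times$ (generators of a commutative ring as a module over itself are its units), and then count units via CRT and the standard formula for finite local rings $\f_q[X]/(Q_i^{p^e})$. Your route is slicker at the counting stage: the unit count $q^{d_ip^e}-q^{d_i(p^e-1)}$ is immediate, with no matrix similarity or nullity lemma needed. The trade-off is the input: you assume the normal basis theorem to get freeness of rank one, whereas the paper derives cyclicity of $\rho$ from the minimal-polynomial computation (ultimately linear independence of characters), so the normal basis theorem comes out as a by-product rather than going in as a hypothesis. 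Both inputs are legitimate here, since the paper treats the normal basis theorem as known background.
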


\begin{proof}
Let $\rho$ be the Frobenius of $\f_{q^n}/\f_q$, that is, $\rho(x)=x^q$ for all $x\in\f_{q^n}$. Then $\rho$ is an $\f_q$-linear map from $\f_{q^n}$ to itself. The minimal polynomial of $\rho$ over $\f_q$ is $X^n-1=\prod_{i=1}^kQ_i^{p^e}$, so the elementary divisors of $\rho$ are $Q_i^{p^e}$, $1\le i\le k$. Therefore, the matrix of $\rho$ is similar to
\[
M:=\left[
\begin{matrix} M_1\cr &\ddots\cr &&M_k\end{matrix}\right],
\]
where $M_i$ is the companion matrix of $Q_i^{p^e}$. We know that $x\in\f_{q^n}$ is a root of a normal polynomial of degree $n$ over $\f_q$ if and only if $g(\rho)(x)\ne 0$ for all $g\in\f_q[X]$ with $g\mid X^n-1$ and $g\ne X^n-1$. Identify $\f_{q^n}$ with $\f_q^n$ which is compatible with the above matrix $M$. Then $N(q,n)=|\mathcal X|/n$, where
\[
\mathcal X=\{x\in\f_q^n: g(M)x\ne 0\ \text{for all $g\in\f_q[X]$ with $g\mid X^n-1$ and $g\ne X^n-1$}\}.
\]
For 
\[
x=\left[\begin{matrix}x_1\cr\vdots\cr x_k\end{matrix}\right]\in\f_q^n,
\]
where $x_i$ is of length $p^ed_i$, we have
\begin{align*}
&g(M)x\ne 0\ \text{for all $g\mid X^n-1$ with $g\ne X^n-1$}\cr
\Leftrightarrow\ &\Bigl(\frac{X^n-1}{Q_i}\Bigr)(M)x\ne 0\ \text{for all $1\le i\le k$}\cr
\Leftrightarrow\ &\Bigl(\frac{X^n-1}{Q_i}\Bigr)(M_i)x_i\ne 0\ \text{for all $1\le i\le k$}.
\end{align*}
By \cite[Lemma~6.11]{Hou-ams-gsm-2018},
\[
\text{nullity}\,\Bigl(\Bigl(\frac{X^n-1}{Q_i}\Bigr)(M_i)\Bigr)=\text{nullity}\,(Q_i^{p^e-1}(M_i))=\deg Q_i^{p^e-1}=(p^e-1)d_i.
\]
Hence 
\[
|\mathcal X|=\prod_{i=1}^k(q^{p^ed_i}-q^{(p^e-1)d_i})=\prod_{i=1}^k q^{(p^e-1)d_i}(q^{d_i}-1)=q^{(p^e-1)t}\prod_{i=1}^k(q^{d_i}-1).
\]
\end{proof}

Theorem~\ref{T4.1} is a criterion that allows us to detect the normality of irreducible polynomials from their coefficients. However, since the condition in Theorem~\ref{T4.1} is sufficient but not necessary, irreducible polynomials that do not meet the criterion may still be normal. Naturally, one would like know how often ``false negative'' cases can occur. Let $N'(q,n)$ denote the number of monic normal polynomials of degree $n$ over $\f_q$ that are detected by Theorem~\ref{T4.1}. In Table~\ref{Tb1}, we computed $N(q,n)$ for $q\le 19$ and $n\le 6$ and $N'(q,n)$ for $q\le 19$ and $n\le 6$ except for $(q,n)=(5,6),(7,6),(11,6),(13,6),(17,6),(19,6)$. The result is quite surprising. For the range of $(q,n)$ computed, $N'(q,n)=N(q,n)$ with only two exceptions: $(q,n)=(2,6)$ and $(8,6)$.


\begin{table}[ht]
\caption{$N(q,n)$ vs the numbers of polynomials from Theorem~\ref{T4.1}}\label{Tb1}
   \renewcommand*{\arraystretch}{1.2}
    \centering
     \begin{tabular}{c c|c|c|c}
         \hline
         $q$  &  $n$ & $N(q,n)$ & $N'(q,n)$ &\kern-0.2em $\begin{array}{ll}\textstyle\text{Remark~\ref{R4.2}}\vspace{-0.5em}\cr \textstyle\text{(ii) (iii)}\end{array}$ \kern-0.2em \\ \hline
         2 & 1 & 1 &1 &   1 \\ 
           & 2 & 1 &1 &   1 \\
           & 3 & 1 &1 &   1 \\
           & 4 & 2 &2 &   2 \\
           & 5 & 3 &3 &   3 \\
           & 6 & 4 &0 &  \\ 
           \hline
         3 & 1 & 2 &2 &   2 \\ 
           & 2 & 2 &2 &   2 \\
           & 3 & 6 &6 &   6 \\
           & 4 & 8 &8 \\
           & 5 & 32 &32 &   32 \\
           & 6 & 54 &54 & \\ 
           \hline 
         4 & 1 & 3 & 3 &   3 \\ 
           & 2 & 6 & 6 &   6 \\ 
           & 3 & 9 & 9 &      \\
           & 4 & 48 & 48 &   48 \\ 
           & 5 & 135 & 135 &   \\
           & 6 & 288 & 288 &   \\ 
           \hline
         5 & 1 & 4 &4 &   4 \\ 
           & 2 & 8 &8 &   8 \\
           & 3 & 32 &32 &   32 \\
           & 4 & 64 &64 \\
           & 5 & 500 &500 &   500 \\
           & 6 & 1,536 \\ 
           \hline
         7 & 1 & 6 &6 &   6 \\ 
           & 2 & 18 &18 &   18 \\
           & 3 & 72 &72 \\
           & 4 & 432 &432 \\
           & 5 & 2,880 &2,880 &   2,880 \\
           & 6 & 7,776 \\ 
           \hline
         8 & 1 & 7 & 7 &   7 \\ 
           & 2 & 28 & 28 &   28 \\
           & 3 & 147 & 147 &   147 \\
           & 4 & 896 & 896 &   896 \\
           & 5 & 5,733 & 5,733 &   5,733 \\
           & 6 & 37,632 & 35,280 \\         
         \hline
     \end{tabular}
\end{table}

\addtocounter{table}{-1}

\begin{table}[ht]
\caption{continued}
   \renewcommand*{\arraystretch}{1.2}
    \centering
     \begin{tabular}{c c|c|c|c}
         \hline
         $q$  &  $n$ & $N(q,n)$ &  $N'(q,n)$ &\kern-0.2em $\begin{array}{ll}\textstyle\text{Remark~\ref{R4.2}}\vspace{-0.5em}\cr \textstyle\text{(ii) (iii)}\end{array}$ \kern-0.2em \\ \hline
         9 & 1 & 8 & 8 & 8   \\ 
           & 2 & 32 & 32 & 32   \\
           & 3 & 216 & 216 & 216   \\
           & 4 & 1,024 & 1,024 &    \\
           & 5 & 10,240 & 10,240 &    \\
           & 6 & 69,984 & 69,984 &    \\ 
         \hline
         11 & 1 & 10 & 10 & 10   \\ 
           & 2 & 50 & 50 & 50   \\
           & 3 & 400 & 400 & 400   \\
           & 4 & 3,000 & 3,000 &    \\
           & 5 & 20,000 & 20,000 &    \\
           & 6 & 240,000 &  &    \\ 
         \hline
         13 & 1 & 12 & 12 & 12   \\ 
           & 2 & 72 & 72 & 72   \\
           & 3 & 576 & 576 &    \\
           & 4 & 5,184 & 5,184 &    \\
           & 5 & 68,544 & 68,544 & 68,544   \\
           & 6 & 497,664 &  &    \\ 
         \hline  
         16 & 1 & 15 & 15 & 15   \\ 
           & 2 & 120 & 120 & 120   \\
           & 3 & 1,125 & 1,125 &    \\
           & 4 & 15,360 & 15,360 & 15,360   \\
           & 5 & 151,875 & 151,875 &    \\
           & 6 & 2,304,000 & 2,304,000 &    \\ 
         \hline
         17 & 1 & 16 & 16 & 16   \\ 
           & 2 & 128 & 128 & 128   \\
           & 3 & 1,536 & 1,536 & 1,536   \\
           & 4 & 16,384 & 16,384 &    \\
           & 5 & 267,264 & 267,264 & 267,264   \\
           & 6 & 3,536,944 &  &    \\ 
         \hline
         19 & 1 & 18 & 18 & 18   \\ 
           & 2 & 162 & 162 & 162   \\
           & 3 & 1,944 & 1,944 &    \\
           & 4 & 29,160 & 29,160 &    \\
           & 5 & 466,560 & 466,560 &    \\
           & 6 & 5,668,704 &  &    \\ 
         \hline            
     \end{tabular}
\end{table}


\section{Conclusions and Final Remarks}

Theoretically, for each $n\ge 1$, there exists a polynomial $h_n(X_1,\dots,X_n)\in\Bbb Z[X_1,\dots,X_n]$ such that a separable irreducible polynomial $f(X)=X^n+a_1X^{n-1}+\cdots+a_n$ over any field $F$ with a cyclic Galois group is normal over $F$ if $h_n(a_1,\dots,a_n)\ne 0$. When $\text{char}\,F=p>0$, there is also a $p$-ary version $h_{p,n}(X_1,\dots,X_n)\in\f_p[X_1,\dots,X_n]$ with the same property. The polynomials $h_n$ and $h_{p,n}$ are difficult to compute in general. Explicit computation of $h_n$ and $h_{p,n}$ can be achieved only for small $n$. Once $h_n$ ($h_{p,n}$) is determined, it can be applied to separable irreducible polynomials of degree $n$ over any field (any field of characteristic $p$) with a cyclic Galois group. Experiments indicate that the sufficient condition for the normality of polynomials over finite fields provided by $h_{p,n}$ are close to being necessary. 

The following two questions arise naturally from the approach of the paper:

1. If we are interested in normal polynomials over finite fields with few terms, then the corresponding question is to compute $h_{p,n}(a_1,\dots,a_n)$, where most of the coefficients $a_1,\dots,a_n$ are zero. It turns out that there is an indirect way to compute such $h_{p,n}(a_1,\dots,a_n)$ for slightly larger $n$.

2. The polynomial $h_n$ applies to separable irreducible polynomials of degree $n$ over any field with a cyclic Galois group. More generally, we may consider separable irreducible polynomials of degree $n$ with an arbitrary Galois group $G$ of order $n$. Then the analogy of the polynomial $\Delta_n$ is the {\em group determinant} $\Theta_G$ of $G$ considered by Frobenius; see \cite{Frobenius-1896, Johnson-MPCPS-1991}. The factors of $\Theta_G$ correspond to the irreducible characters of $G$, and the symmetrizations of the factors of $\Theta_G$ can be determined at least in theory.

We plan to discuss the above questions in detail in a separate paper.


\section*{Acknowledgments}

The author thanks Neranga Fernando for the helpful discussions and his assistance in computation.



\section*{Appendix}


\noindent A1. $\Psi_n$, $1\le n\le 6$.

\[
\Psi_1=X_0.
\]

\[
\Psi_2=X_0-X_1.
\]

\[
\Psi_3=X_0^2-X_1 X_0-X_2 X_0+X_1^2+X_2^2-X_1 X_2.
\]

\[
\Psi_4=X_0^2-2 X_2 X_0+X_1^2+X_2^2+X_3^2-2 X_1 X_3.
\]

\[
\longequation{
\Psi_5=X_0^4-X_1 X_0^3-X_2 X_0^3-X_3 X_0^3-X_4 X_0^3+X_1^2 X_0^2+X_2^2 X_0^2+X_3^2
   X_0^2+X_4^2 X_0^2+2 X_1 X_2 X_0^2+2 X_1 X_3 X_0^2-3 X_2 X_3 X_0^2-3 X_1
   X_4 X_0^2+2 X_2 X_4 X_0^2+2 X_3 X_4 X_0^2-X_1^3 X_0-X_2^3 X_0-X_3^3
   X_0-X_4^3 X_0+2 X_1 X_2^2 X_0-3 X_1 X_3^2 X_0+2 X_2 X_3^2 X_0+2 X_1
   X_4^2 X_0+2 X_2 X_4^2 X_0-3 X_3 X_4^2 X_0-3 X_1^2 X_2 X_0+2 X_1^2 X_3
   X_0+2 X_2^2 X_3 X_0-X_1 X_2 X_3 X_0+2 X_1^2 X_4 X_0-3 X_2^2 X_4 X_0+2
   X_3^2 X_4 X_0-X_1 X_2 X_4 X_0-X_1 X_3 X_4 X_0-X_2 X_3 X_4
   X_0+X_1^4+X_2^4+X_3^4+X_4^4-X_1 X_2^3-X_1 X_3^3-X_2 X_3^3-X_1 X_4^3-X_2
   X_4^3-X_3 X_4^3+X_1^2 X_2^2+X_1^2 X_3^2+X_2^2 X_3^2+2 X_1 X_2 X_3^2+X_1^2
   X_4^2+X_2^2 X_4^2+X_3^2 X_4^2-3 X_1 X_2 X_4^2+2 X_1 X_3 X_4^2+2 X_2 X_3
   X_4^2-X_1^3 X_2-X_1^3 X_3-X_2^3 X_3-3 X_1 X_2^2 X_3+2 X_1^2 X_2
   X_3-X_1^3 X_4-X_2^3 X_4-X_3^3 X_4+2 X_1 X_2^2 X_4+2 X_1 X_3^2 X_4-3 X_2
   X_3^2 X_4+2 X_1^2 X_2 X_4-3 X_1^2 X_3 X_4+2 X_2^2 X_3 X_4-X_1 X_2 X_3
   X_4.
}
\]

\[
\longequation{
\Psi_6=X_0^2+X_1 X_0-X_2 X_0-2 X_3 X_0-X_4 X_0+X_5
   X_0+X_1^2+X_2^2+X_3^2+X_4^2+X_5^2+X_1 X_2-X_1 X_3+X_2 X_3-2 X_1 X_4-X_2
   X_4+X_3 X_4-X_1 X_5-2 X_2 X_5-X_3 X_5+X_4 X_5.
}
\]   

\bigskip

\noindent A2. $\Phi_n$, $1\le n\le 6$.

\[
\Phi_1=X_0.
\]

\[
\Phi_2=(X_0-X_1)(X_1-X_0).
\]

\[
\Phi_3=X_0^2-X_1 X_0-X_2 X_0+X_1^2+X_2^2-X_1 X_2.
\]

\[
\Phi_4=\prod_{(i_1,i_2,i_3)}\Psi_4(X_0,X_{i_1},X_{i_2},X_{i_3}), 
\]
where
\[
(i_1,i_2,i_3)=(1, 3, 2), (1, 2, 3), (2, 1, 3).
\]

\[
\Phi_5=\prod_{(i_2,i_3,i_4)}\Psi_5(X_0,X_1,X_{i_2},X_{i_3},X_{i_4}), 
\]
where
\[
(i_2,i_3,i_4)=(2, 3, 4), (2, 4, 3), (3, 2, 4), (4, 2, 3), (3, 4, 2), 
(4, 3, 2).
\]

\[
\Phi_6=\prod_{(i_1,i_2,i_3,i_4,i_5)}\Psi_6(X_0,X_{i_1},X_{i_2},X_{i_3},X_{i_4},X_{i_5}), 
\]
where
\begin{align*}
&(i_1,i_2,i_3,i_4,i_5)=\cr
&(1,3,4,5,2),(1,3,5,4,2),(1,4,3,5,2),(1,4,5,3,2),(1,5,3,4,2),(1,5,4,3,2),\cr
&(1,2,4,5,3),(1,2,5,4,3),(1,4,2,5,3),(1,4,5,2,3),(1,5,2,4,3),(1,5,4,2,3),\cr
&(1,2,3,5,4),(1,2,5,3,4),(1,3,2,5,4),(1,3,5,2,4),(1,5,2,3,4),(1,5,3,2,4),\cr
&(1,2,3,4,5),(1,2,4,3,5),(1,3,2,4,5),(1,3,4,2,5),(1,4,2,3,5),(1,4,3,2,5),\cr
&(2,1,4,5,3),(2,1,5,4,3),(2,4,1,5,3),(2,4,5,1,3),(2,5,1,4,3),(2,5,4,1,3),\cr
&(2,1,3,5,4),(2,1,5,3,4),(2,3,1,5,4),(2,3,5,1,4),(2,5,1,3,4),(2,5,3,1,4),\cr
&(2,1,3,4,5),(2,1,4,3,5),(2,3,1,4,5),(2,3,4,1,5),(2,4,1,3,5),(2,4,3,1,5),\cr
&(3,1,2,5,4),(3,1,5,2,4),(3,2,1,5,4),(3,2,5,1,4),(3,5,1,2,4),(3,5,2,1,4),\cr
&(3,1,2,4,5),(3,1,4,2,5),(3,2,1,4,5),(3,2,4,1,5),(3,4,1,2,5),(3,4,2,1,5),\cr
&(4,1,2,3,5),(4,1,3,2,5),(4,2,1,3,5),(4,2,3,1,5),(4,3,1,2,5),(4,3,2,1,5).
\end{align*}

\bigskip

\noindent A3. $\Theta_{n,m}$, $1\le n\le 6$, $m\mid n$.

\[
\Theta_{1,1}=X_0.
\]

\[
\Theta_{2,1}=X_0+X_1.
\]

\[
\Theta_{2,2}=\Phi_2.
\]

\[
\Theta_{3,1}=X_0 + X_1 + X_2.
\]

\[
\Theta_{3,3}=\Phi_3.
\]

\[
\Theta_{4,1}=X_0 + X_1 + X_2 + X_3.
\]

\[
\Theta_{4,2}=\prod_{(i_1,i_2,i_3)}\Phi_2(X_0+X_{i_2},X_{i_1}+X_{i_3}),
\]
where
\[
(i_1,i_2,i_3)=(2,1,3), (1,2,3), (1,3,2).
\]

\[
\Theta_{4,4}=\Phi_4.
\]

\[
\Theta_{5,1}=X_0 + X_1 + X_2 + X_3 + X_4.
\]

\[
\Theta_{5,5}=\Phi_5.
\]

\[
\Theta_{6,1}=X_0 + X_1 + X_2 + X_3 + X_4 + X_5.
\]

\[
\Theta_{6,2}=\prod_{(i_1,i_2,i_3,i_4,i_5)}\Phi_2(X_0+X_{i_2}+X_{i_4}, X_{i_1}+X_{i_3}+X_{i_5}),
\]
where 
\begin{align*}
(i_1,i_2,i_3,i_4,i_5)=\,&(3,1,4,2,5),(2,1,4,3,5),(2,1,3,4,5),(2,1,3,5,4),(1,2,4,3,5),\cr
&(1,2,3,4,5),(1,2,3,5,4),(1,3,2,4,5),(1,3,2,5,4),(1,4,2,5,3).
\end{align*}

\[
\Theta_{6,3}=\prod_{(i_1,i_2,i_3,i_4,i_5)}\Phi_3(X_0+X_{i_3},X_{i_1}+X_{i_4},X_{i_2}+X_{i_5}),
\]
where 
\begin{align*}
(i_1,i_2,i_3,i_4,i_5)=\,
&(2,4,1,3,5),(2,3,1,4,5),(2,3,1,5,4),(1,4,2,3,5),(1,3,2,4,5),\cr
&(1,3,2,5,4),(1,4,3,2,5),(1,2,3,4,5),(1,2,3,5,4),(1,3,4,2,5),\cr
&(1,2,4,3,5),(1,2,4,5,3),(1,3,5,2,4),(1,2,5,3,4),(1,2,5,4,3).
\end{align*}

\[
\Theta_{6,6}=\Phi_6.
\]

\bigskip

\noindent A4. $\theta_{n,m}$, $1\le n\le 6$, $m\mid n$, $(n,m)\ne (6,6)$.

\[
\theta_{1,1}=s_1.
\]

\[
\theta_{2,1}=s_1.
\]

\[
\theta_{2,2}=-s_1^2+4s_2.
\]

\[
\theta_{3,1}=s_1.
\]

\[
\theta_{3,3}=s_1^2-3s_2.
\]

\[
\theta_{4,1}=s_1.
\]

\[
\theta_{4,2}=-s_1^6+8 s_2 s_1^4-16 s_3 s_1^3-16 s_2^2 s_1^2+64 s_2 s_3 s_1-64
   s_3^2.
\]

\[
\theta_{4,4}=s_1^6-8 s_2 s_1^4+4 s_3 s_1^3+20 s_2^2 s_1^2-24 s_4 s_1^2-8 s_2 s_3
   s_1-16 s_2^3-8 s_3^2+64 s_2 s_4.
\]

\[
\theta_{5,1}=s_1.
\]

\[
\theta_{5,5}=
s_1^{24}-30 s_2 s_1^{22}+\cdots
   +7968750 s_2^3 s_3^3 s_4 s_5+8750000 s_2^6
   s_3 s_4 s_5.\ \text{(325 terms)}
\]

\[
\theta_{6,1}=s_1.
\]

\[
\theta_{6,2}=
s_1^{20}-24 s_2 s_1^{18}+\cdots-2097152 s_2^3 s_3 s_5
   s_6+8388608 s_2 s_3 s_4 s_5 s_6.\ \text{(194 terms)}
\]

\begin{align*}
\theta_{6,3}=\,&
s_1^{30}-36 s_2 s_1^{28}+\cdots-1549681956 s_2^3 s_3^3 s_4 s_5 s_6-172186884 s_2^6 s_3 s_4 s_5s_6.\cr
&\text{(1057 terms)}
\end{align*}



\end{document}